\theoremstyle{plain}
\newtheorem{theorem}{Theorem}[section]
\newtheorem{lm}[theorem]{Lemma}
\newtheorem{prop}[theorem]{Proposition}
\theoremstyle{definition}
\newtheorem{defi}[theorem]{Definition}
\newtheorem{question}[theorem]{Question}
\theoremstyle{remark}
\newcommand*\circled[1]{\tikz[baseline=(char.base)]{
           \node[shape=circle,draw,inner sep=1pt] (char) {#1};}}
\DeclareMathOperator{\Pic}{Pic}
\DeclareMathOperator{\Sing}{Sing}
\DeclareMathOperator{\Spec}{Spec}
\DeclareMathOperator{\Sl}{GL}
\DeclareMathOperator{\NNE}{\overline{NE}}
\DeclareMathOperator{\Pf}{Pf}
\DeclareMathOperator{\pli}{\mathcal{P}}
\newcommand{\C}{\mathbb{C}}
\newcommand{\Z}{\mathbb{Z}}
\newcommand{\Q}{\mathbb{Q}}
\newcommand{\x}{\mathrm{x}}
\newcommand{\y}{\mathrm{y}}
\newcommand{\z}{\mathrm{z}}
\newcommand{\I}{{\rm I}}
\newcommand{\II}{{\rm II}}
\newcommand{\PP}{\mathbb{P}}
\newcommand\qt{{\slash\kern-0.65ex\slash}}
\title{Circle of Sarkisov links on a Fano $3$-fold}
\author{Hamid Ahmadinezhad}
\author{Francesco Zucconi}
\keywords{Birational automorphism; Fano varieties; Sarkisov program; Variation of Geometric Invariant Theory.\\}
\subjclass[2010]{14E05, 14E30, 14E07 and 14E08}
\begin{document}

\begin{abstract} For a general Fano $3$-fold of index $1$ in the weighted projective space $\PP(1,1,1,1,2,2,3)$ we construct $2$ new birational models that are Mori fibre spaces, in the framework of the so-called Sarkisov program. We highlight a relation between the corresponding birational maps, as a circle of Sarkisov links, visualising the notion of relations (due to Kaloghiros) in Sarkisov program.

\end{abstract}

\maketitle

\setcounter{tocdepth}{1}
\tableofcontents

\section{Introduction}
All varieties in this article are projective over the field of complex numbers.

The question of rationality in algebraic geometry is classic. It asks whether a given algebraic variety $X$ of dimension $n$ is birational to $\PP^n$; or equivalently the field of rational functions $\C(X)$ is isomorphic to $\C(x_1,\dots,x_n)$. In dimension $1$ the only rational variety is the projective line. In dimension $2$, we have a classification of rational surfaces, thanks to the Italian school of Castelnuovo, Cremona, Enriques and others. 

In dimension three this question boils down to the rationality question for uni-ruled varieties, as rationality implies uni-ruledness. The Minimal Model Program (MMP for short) in birational geometry (\cite{kollar-mori}) produces a so-called Mori fibre space (Mfs) for a smooth uni-ruled variety, see Definition~\ref{Mfs}. For example, Fano varieties with Picard number $1$ form a class of Mfs. The MMP provides a framework for classification of algebraic varieties, and in particular of Mfs. 

The next step is to study relations between the Mfs. This, in particular, includes the rationality question as $\PP^n$ itself is a Mfs.
In fact, the notion of pliability for a Mfs, introduced by Corti \cite{corti-mella}, is an invariant that formalises the classification of Mfs up to a natural equivalence relation $\sim$, the square birationality (see Definition~\ref{square} below).

\begin{defi}[Corti]\label{pliability}The {\it pliability} of a Mori fibre space $X\rightarrow S$ is the set
\[\mathcal{P}(X\slash S)=\{\text{Mfs } Y\rightarrow T\mid X\text{ is birational to } Y\}\slash\sim\] We sometimes use the term pliability to mean the cardinality of this set. When the base $S$ is a point we use the notation $\pli(X)$. \end{defi}

For instance if $\pli(X\slash S)$ is finite, then $X$ is not rational, as $|\pli(\PP^3)|=\infty$. Varieties with $\pli(X/S)=1$, the so-called {\it birationally rigid varieties} have specially attracted attention in the past decades.  The first example of such variety is a smooth quartic in $\PP^4$, discovered by Iskovskikh and Manin in \cite{isk-manin}.

\subsection*{The graph structure of $\pli(X/S)$} The theory of Sarkisov program, developed by Corti \cite{Corti2} and generalised to higher dimensions by Hacon and McKernan~\cite{HM}, states that any birational map between Mfs decomposes as a finite sequence of {\it elementary Sarkisov links} (ESL). Each ESL is produced, as explained in \S\ref{2-ray-game}, by a chain of forced moves called {\it $2$-ray game}. 
\begin{defi} Define the {\it pliability graph} of a Mfs $X/S$ to be $G_{X/S}$ that is the quotient of the graph $G=(V,E)$ with {\it square relation},  where the vertices of $G$ are $V=\{v_\alpha\}$ and edges are $E=\{e_{\alpha\beta}\}$, where $v_\alpha$ is connected to $v_\beta$  if and only if $e_{\alpha\beta}$ is an ESL. The square relation is defined to be $v_\alpha\sim v_\beta$ if $e_{\alpha\beta}\in E$ can be made square birational, after a self-map of $v_\alpha$ (see Definition~\ref{square}).
\end{defi}

A Sarkisov relation, introduced by Kaloghiros \cite{relations}, is a loop in $G$ that contains at least three vertices. And an elementary Sarkisov relation is a triangle (in $G$). 

The following question gives insight to the structure of $\pli(X/S)$. 

\begin{question}\label{graph} Given a Mfs $X/S$, if $|V|>2$ does $G_{X/S}$ contain a tree?
\end{question}
\begin{question}\label{graph-complete} Given a Mfs $X/S$ with$|V|>2$, when is $G_{X/S}$ complete?
\end{question}
It follows from the main result of \cite{relations} that a loop in $G_{X/S}$ is triangulizable. Hence, if the answer to the Question~\ref{graph} is ``No'' then $\pli(X/S)$ is naturally constructed by ESL on $X/S$ only. 

In this article we construct such a triangle (in $G_{X/S}$) on a non-trivial Fano $3$-fold. The expectation is that if Question~\ref{graph-complete} has a negative answer (it has a tree with $3$ vertices and they do not form a triangle) it would have been for a variety with the properties similar to the one we study: we start with a Fano $3$-fold with two distinct singular points and construct two new models from the blow up of these points. Then, using some delicate machinery, we show that these three models make a loop (triangle). Namely we prove:

\begin{theorem}\label{main-intro} Let $X\subset\PP(1,1,1,1,2,2,3)$ be a codimension three Fano $3$-fold. If $X$ is general, in particular quasi-smooth, then it has only two singular points, of type $\frac{1}{2}(1,1,1)$ and $\frac{1}{3}(1,1,2)$, and is birational to a Mori fibre space over $\PP^1$, with cubic surface fibres. The birational map factors in two different ways through Sarkisov links, one of which includes two links, through a codimension two Fano $3$-fold $Y_{3,3}\subset\PP(1,1,1,1,1,2)$. In particular, the Pliability of $X$ is at least three and there is a circle of Sarkisov links between the models. Moreover, $X$ is not rational.\end{theorem}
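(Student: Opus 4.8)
The plan is to analyze $X \subset \PP(1,1,1,1,2,2,3)$ via the two $2$-ray games initiated by blowing up each of its two singular points, and to identify the resulting Mori fibre spaces. First I would establish the basic geometry: that a general member $X$ of the family is quasi-smooth of codimension three (a Pfaffian-type variety, presumably cut out by the maximal Pfaffians of a skew matrix), compute its Hilbert series to confirm $-K_X = \O_X(1)$ (index $1$), and verify that the only basepoints of the anticanonical system — equivalently the only singularities of $X$ — are the cyclic quotient points $P_2 = \frac{1}{2}(1,1,1)$ and $P_3 = \frac{1}{3}(1,1,2)$ forced by the two weight-$2$ variables and the weight-$3$ variable. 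This is essentially the standard Alt\i nok/Reid-style enumeration of graded rings together with a Bertini-type genericity argument, so I would treat it as routine, quoting the appropriate results on weighted Pfaffians.

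The core of the proof is the two $2$-ray games. \textbf{Link from $P_3$.} I would blow up the $\frac{1}{3}(1,1,2)$ point to get $X_1 \to X$ and run the $2$-ray game on $X_1$; I expect the game to end in a fibration over $\PP^1$ whose fibres are cubic surfaces — this is the Mfs $\mathcal{T}/\PP^1$ in the statement. \textbf{Link from $P_2$.} Blowing up $\frac{1}{2}(1,1,1)$ gives $X_2 \to X$; here I expect the $2$-ray game to produce an anticanonical flop or flip followed by a divisorial contraction onto the codimension two Fano $3$-fold $Y = Y_{3,3} \subset \PP(1,1,1,1,1,2)$ (a complete intersection of two cubics), i.e.\ a single Sarkisov link $X \dashrightarrow Y$. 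Then a second link — blowing up the quotient singularity of $Y$ coming from its weight-$2$ coordinate and playing that game — should again land on $\mathcal{T}/\PP^1$. The unprojection/projection structure relating the Pfaffian $X$ to the complete intersection $Y$ (Kustin--Miller style, Type $\mathrm{I}$ unprojection) is what makes these two varieties birational in a controlled way, and is the conceptual heart of why the diagram closes up. Throughout, the concrete computations are the $2$-ray games in suitable toric ambient spaces obtained by VGIT (variation of GIT): each game is a walk through chambers of a $2$-dimensional GIT cone, and the combinatorics dictate the flips, flops and final contraction.

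To assemble the theorem I would argue as follows. The two links out of $X$ (the direct one $X \dashrightarrow Y$, and the composite $X \dashrightarrow \mathcal{T}$ through $X_1$) together with the link $Y \dashrightarrow \mathcal{T}$ form a triangle of Sarkisov links, i.e.\ a loop with three vertices $X$, $Y$, $\mathcal{T}/\PP^1$ in the pliability graph $G_{X}$; this is the promised elementary Sarkisov relation. To conclude $|\pli(X)| \geq 3$ I must check these three Mfs are pairwise non-square-birational: $X$ and $Y$ are Fano of Picard rank one so are not square-equivalent to the fibration $\mathcal{T}/\PP^1$, and $X \not\sim Y$ because they are genuinely different Fano $3$-folds (different Hilbert series / different singularity baskets), so no self-map of one can identify them. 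Finally, non-rationality of $X$ follows because $\mathcal{T}/\PP^1$ has cubic surface fibres: a standard argument (the generic fibre is a cubic surface over $\C(\PP^1)$ which is not rational over that function field — e.g.\ it has no $\C(t)$-point, or one invokes the known birational rigidity / non-rationality results for such conic-bundle-like or del Pezzo fibrations of low degree) shows $\mathcal{T}$, hence $X$, is irrational; alternatively one runs a direct Noether--Fano style exclusion on $X$ itself.

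\textbf{The main obstacle} I anticipate is controlling the two $2$-ray games precisely enough to prove that the \emph{same} Mfs $\mathcal{T}/\PP^1$ appears at the end of both — and that the link $X \dashrightarrow Y$ really is a single Sarkisov link (no hidden intermediate contractions) — rather than just producing \emph{some} Mfs in each case. Making the games rigorous requires setting up $X_1$ and $X_2$ inside explicit toric varieties, tracking the anticanonical class and the effective/movable cones through each VGIT chamber wall, and verifying at each wall whether we cross a flip, a flop, a divisorial contraction or a fibration — and in particular checking the delicate point where one game passes through a non-terminal or non-$\Q$-factorial intermediate variety that then gets resolved. The identification of the intermediate Fano $Y_{3,3}$ and the matching of the two endpoints is where "some delicate machinery" (as the introduction puts it) is genuinely needed, and I expect it to rely on a careful unprojection analysis linking the two graded rings.
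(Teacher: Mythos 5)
Your overall route is the same as the paper's: Kawamata blow-ups of the two quotient points, toric 2-ray games run by variation of GIT, a link from the $\frac{1}{2}(1,1,1)$ point ending in a divisorial contraction to $Y_{3,3}\subset\PP(1,1,1,1,1,2)$, a link from the $\frac{1}{3}(1,1,2)$ point ending in a cubic surface fibration over $\PP^1$, a further link out of $Y$, and non-rationality quoted from known results on degree $3$ del Pezzo fibrations (the paper cites Cheltsov). However, there is a genuine gap exactly at the point you flag as the ``main obstacle'': you hope that the game out of $Y$ lands on \emph{the same} Mori fibre space $\mathcal{T}/\PP^1$ as the game out of the $\frac{1}{3}(1,1,2)$ point. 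That is false as stated. In the paper the model $\widetilde{Z}$ obtained from the index-three point is a fibration whose fibres are $(2,3)$ complete intersections in $\PP(1,1,1,1,2)$ and which carries a $\frac{1}{2}(1,1,1)$ point, while the model $Z$ obtained from $Y$ is a cubic hypersurface fibration with a $cA_1$ point; they are not isomorphic, only square birational. Closing the circle requires a further Sarkisov link (of type $\II$) out of $\widetilde{Z}$: blow up its $\frac{1}{2}(1,1,1)$ point, perform six flops over the base, then contract a divisor to reach a new cubic fibration $\overline{Z}$, and finally check by explicit equations that $\overline{Z}\cong Z$. This fourth link, not a Kustin--Miller unprojection computation, is the missing idea; without it your triangle in the pliability graph does not close, since the square relation between $\widetilde{Z}/\PP^1$ and $Z/\PP^1$ is precisely what has to be proved.

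A second, smaller but real, problem is your primary non-rationality argument. The generic fibre of any cubic surface fibration over $\PP^1$ \emph{does} have a $\C(t)$-point (the fibres are rationally connected, so the fibration has a section), and even non-rationality of the generic fibre over $\C(t)$ does not imply non-rationality of the total space: a general divisor of bidegree $(3,1)$ in $\PP^3\times\PP^1$ is a cubic surface fibration whose generic fibre has Picard rank one over $\C(t)$, hence is non-rational over $\C(t)$ by Segre's theorem, yet the total space is rational via the birational projection to $\PP^3$. So only your fallback works: invoke the non-rationality theorems for del Pezzo fibrations of degree $3$ (as the paper does via Cheltsov), after verifying their hypotheses for the specific fibration produced by the 2-ray game. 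A Noether--Fano exclusion on $X$ itself would amount to proving a rigidity-type statement that is not available here (indeed $\pli(X)\geq 3$), so it is not a realistic alternative.
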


This theorem also provides a non-trivial family of examples for \cite{relations}. On the other hand, while we do not prove finiteness of pliability we use some known results, due to Cheltsov~\cite{vanya-rational}, to show that the Fano varieties in Theorem~\ref{main-intro} are not rational.


\section{Mori fibre spaces and relations among them}

In this section we recall some definitions and known facts about Mfs. The formal definition of a Mfs, in any dimension, is the following.

\begin{defi}\label{Mfs} A Mori fibre space is a variety $X$ together with a morphism $\varphi\colon X\rightarrow S$ such that
\begin{enumerate}[(i)]
\item $X$ is $\Q$-factorial and has at worst terminal singularities,
\item $-K_X$, the anti-canonical class of $X$, is $\varphi$-ample,
\item and $X/S$ has relative Picard number $1$.
\end{enumerate}
\end{defi}

It is natural to not differentiate between two Mfs if they have the same structure up to a fibre-wise transform. Formally speaking this leads to the following.

\begin{defi}\label{square} Let $\varphi\colon X\rightarrow S$ and $\varphi^\prime\colon X^\prime\rightarrow S^\prime$ be Mori fibre spaces such that there is a birational map $f\colon X\dashrightarrow X^\prime $. The map $f$ is said to be {\it square} if there is a birational map $g\colon S\dashrightarrow S^\prime$, which makes the diagram 
\begin{center}$\xymatrixcolsep{2.8pc}\xymatrixrowsep{2.8pc}
\xymatrix{
X\ar[d]_{\varphi}\ar@{-->}[r]^f&X^\prime\ar[d]^{\varphi^\prime}\\
S\ar@{-->}[r]^g&S^\prime}$\end{center}
commute and, in addition, the induced birational map $f_L\colon X_L\rightarrow X^\prime_L$ between the generic fibres is biregular. In this situation, we say that the two Mori fibre spaces $X\rightarrow S$ and $X^\prime\rightarrow S^\prime$ are {\it birational square}, and denote it by $(X/S)\sim(X'/S')$.\end{defi}

Mfs in dimension three form three classes, decided by the dimension of $S$:
\begin{enumerate}[(1)]
\item Fano varieties, when $\dim S=0$,
\item del Pezzo fibrations, when $\dim S=1$,
\item and conic bundles, when $\dim S=2$.
\end{enumerate}

\subsection*{The graded ring approach} Concerning the Fano case, the comparatively less studied case, there are two natural questions to tackle: construct all possible Fano $3$-folds (classification), and find the relations between these models. The classical approach to the classification of Fano $3$-folds can be found in \cite{yuri-fano}. The modern approach, however, is to view these objects as varieties embedded in weighted projective spaces, via studying the graded ring $R(X,-K_X)$. There are $95$ families of Fano $3$-folds embedded in a weighted projective space as hypersurfaces, see \cite{fle}. Similarly, there are $85$ families in codimension two,  $70$ candidates in codimension three and $145$ candidates in codimension four, see \cite{altinok , BS , BS1 , ABR , BKR} for explicit construction and description of the models or \cite{database} for the database. The next step, as we discussed, is to study birational relations between these models. As a generalisation of the work of Iskovskikh and Manin \cite{isk-manin}, it was shown by Corti, Pukhlikov and Reid in \cite{CPR} that a general member in the 95 families has pliability $1$. This has been recently generalised for quasi-smooth models by Cheltsov and Park \cite{vanya-jihun}. The case of codimension $2$ has been recently studied by Okada in \cite{okada1, okada2}, and it was shown in \cite{BZ} codimension $3$ models have pliability bigger that 1. Our model of study can also be considered as a first step to analyse the relations in the pliability set of codimension $3$ fano $3$-folds.

\subsection{The 2-ray game on a Fano $3$-fold}\label{2-ray-game}
We start with a Fano $3$-fold $X$ with $\Pic(X)=\Z$. According to \cite[\S2.2]{Corti1} a 2-ray game, as the building block of an ESL, starts by (possible weighted) blowing up a centre (a point or a curve $C$ in $X$) $(Y,E)\rightarrow (X,C)$, such that $Y$ is still $\Q$-factorial and terminal. By assumption $\Pic(Y)=\Z^2$ and hence $\NNE(Y)$ is a convex cone in $\Q^2$. Therefore there are at most two extremal projective morphisms from $Y$ corresponding to the two boundaries of $\NNE(Y)$, and we know one of them! If the other map exists with connected fibres and it does not contract a codimension $2$ locus then it is either a divisorial contraction or a fibration, and the game stops. If the contracted locus is $1$-dimensional we check whether the flip exists, if so we replace $Y$ by the new variety $Y_1$, which has Picard number two. One boundary of $\NNE(Y_1)$ corresponds to the map that goes back to the base of the flip and we seek the other boundary and continue the game. It the game terminates and all flips and divisorial contractions and fibrations are in the Mori category (c.f. \cite{kollar-mori}) then we have an ESL. The generalisation for Mfs is natural and we refer to \cite{Corti1} for this.

Throughout this article we play the 2-ray game with means of Cox rings as in \cite{BZ} and \cite{Ahm-3}.

\section{The initial model and its singularities}
The variety under consideration is a $3$-fold $X$ embedded in the weighted 
projective space $\PP(1,1,1,1,2,2,3)$; for brevity we denote this weighted 
projective space by $\PP$. Let the coordinates of $\PP$ be $x,x_1,x_2,x_3,y,y_1,z$. The $3$-fold $X$ is defined 
by the vanishing of the Pfaffians of a $5\times 5$ skew-symmetric matrix 
with upper triangle block given by
\[\left(\begin{array}{cccc}
y&A_3&y_1+C_2&-x_1\\
&B_3&D_2&x\\
&&z&-y_1\\
&&&x_3\end{array}\right)\]
where $A$ and $B$ are general cubic forms, and $C$ and $D$ are general quadratic forms in variables $x,x_1,x_2,x_3$, see \cite{altinok} for general construction. In other words $X$ is the vanishing of

\begin{equation}\label{eq2}\begin{array}{cccl}
\circled{1}& \Pf_{1234}:&& yz=AD-(y_1+C)B\\
\circled{2}&\Pf_{1235}:&& yy_1=-xA-x_1B\\
\circled{3}&\Pf_{1245}:&& yx_3=x(y_1+C)+x_1D\\
\circled{4} &\Pf_{1345}:&& x_3A=-y_1(y_1+C)+x_1z\\
\circled{5} &\Pf_{2345}:&& x_3B=-y_1D-xz\end{array}\end{equation}
\paragraph*{\bf Singular locus of the $3$-fold.}
Note that $A, B, C, D$ are general so that $X$ is quasi-smooth. In particular, the singular locus of $X$ is $\Sing(X)=\{p_y,p_z\}$, where, for example, $p_y$ is the point $(0\colon\!\!0\colon\!\!0\colon\!\!0\colon\!\!1\colon\!\!0\colon\!\!0)\in X\subset\PP$. It is a standard verification, using the equations~(\ref{eq2}), to see that the germ $p_y\in X$ is isomorphic to the terminal singularity of type $1/2(1,1,1)$. Here we explain how this isomorphism is obtained. This notation will be repeatedly used throughout this article without further explanation.

Define the Zariski open subset $U_y\subset\PP$, as the complement of the locus $(y=0)$. This subset is defined by
\[U_y=\Spec [x,x_1,x_2,x_3,y,y_1,z,\frac{1}{y}]^{\C^*}\]
which is isomorphic to the quotient space 
\[\Spec [x,x_1,x_2,x_3,y_1,z]/\Z_2\]
where $\Z_2$ acts, on coordinates, by 
\[\epsilon\cdot(x,x_1,x_2,x_3,y_1,z)\mapsto(\epsilon x,\epsilon x_1,\epsilon x_2,\epsilon x_3,y_1,\epsilon z)\]
where $\epsilon$ is a $2^\text{nd}$ root of unity. In other words, $U_y\cong\C^6/Z_2$; a typical case of a quotient singularity.

Note that the point $p_y$ can be viewed as the origin in this quotient. The singular locus of this quotient is a line passing through the origin. We use the notation $1/2(1,1,1,1,0,1)$ for this point, on the sixfold. There are three tangent variables near this point, $z,y_1$ and $x_3$, by $\circled{1}\,, \circled{2}$ and $\circled{3}$. Hence the germ $p_y\in X$ is isomorphic to the origin in the quotient space $1/2(1,1,1)$. In particular, $p_y$ is an isolated singularity on the $3$-fold. Similarly, one can check that $p_z$ is of type $1/3(1,1,2)$, and $X$ has no other singularities.

Below, in \S\ref{first} and \S\ref{second}, we construct two birational maps, starting by Sarkisov links initiated by blowing up these two points.

\begin{defi}The {\it index} of a singular point of type $1/n(a,b,c)$ is defined to be the number $n$.\end{defi}

At the heart of our calculations lies the theorem of Kawamata that asserts what divisorial contractions are centered at quotient singularities. 

\begin{theorem}[Kawamata~\cite{Kawamata}]\label{kawamata}Let $(p\!\in X)\cong 1/r(a,r-a,1)$ be the germ of a $3$-fold terminal quotient singularity. In particular, $a$ and $r$ are coprime and $r\geq 2$. Suppose that $\varphi\colon(E\subset Y)\rightarrow(\Gamma\subset X)$ is a divisorial contraction such that $Y$ is terminal and $p\in\Gamma$. Then $\Gamma=\{p\}$ and $\varphi$ is the weighted blow up with weights $(a,r-a,1)$.
\end{theorem}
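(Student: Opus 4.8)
Here is the line of attack I would take for Kawamata's theorem.

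\emph{Setup and strategy.} The assertion is local over $X$, so I would replace $(p\in X)$ by the quotient germ $\C^{3}/\Z_{r}$, on which $\Z_{r}=\langle\zeta\rangle$ acts diagonally by $\zeta\cdot(u_{1},u_{2},u_{3})=(\zeta^{a}u_{1},\zeta^{r-a}u_{2},\zeta u_{3})$; since $\gcd(a,r)=1$ this action is free away from the origin, so the quotient map $\pi\colon\C^{3}\to X$ is étale over $X\setminus\{p\}$. The plan is to lift the contraction $\varphi$ to the smooth cover $\C^{3}$, where divisorial contractions are classified, and then to descend.

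\emph{The cover.} First I would form $\widetilde Y$, the normalisation of the component of $Y\times_{X}\C^{3}$ dominating $\C^{3}$. The $\Z_{r}$-action lifts to a faithful action with $\widetilde Y/\Z_{r}=Y$; write $\rho\colon\widetilde Y\to Y$ for the quotient and $g\colon\widetilde Y\to\C^{3}$ for the other projection, so that $g$ is a $\Z_{r}$-equivariant proper birational morphism, an isomorphism over $\C^{3}\setminus\{0\}$, and $\rho$ is étale over $Y\setminus E$. The technical heart of the argument is to verify that this construction stays in the Mori category: that $\widetilde Y$ is again $\Q$-factorial with terminal singularities, that $g$ is a divisorial contraction with $\rho(\widetilde Y/\C^{3})=1$, and that its exceptional divisor $\widetilde E$ is irreducible with $g(\widetilde E)=\pi^{-1}(\Gamma)$. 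When $\Gamma=\{p\}$ the cover $\rho$ is totally ramified along $\widetilde E$, and bookkeeping the ramification gives the discrepancy identity
\[
K_{\widetilde Y}=g^{*}K_{\C^{3}}+\bigl(r\,\discr(E,X)+r-1\bigr)\,\widetilde E .
\]

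\emph{Reducing to a point and classifying.} Next I would show $\Gamma=\{p\}$: were $\Gamma$ positive–dimensional, then so would be $\pi^{-1}(\Gamma)=g(\widetilde E)$, and analysing the singularities that $Y$ acquires along $\Gamma$ — equivalently, forming the $\Z_{r}$-quotient of the blow-up of a smooth curve in $\C^{3}$ — produces non‑isolated singularities of $Y$, contradicting that $Y$ is terminal. This is precisely where the hypothesis on $Y$ is used. With $\Gamma=\{p\}$, the morphism $g$ is a divisorial contraction to the smooth point $0\in\C^{3}$, and by the classification of such contractions it is the weighted blow-up of $0$ with weights $(1,m,n)$ for some coprime $m,n$, so $\discr(\widetilde E,\C^{3})=(1+m+n)-1=m+n$.

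\emph{Descending, and the main obstacle.} Being a weighted blow-up of $\C^{3}$ at a torus–fixed point, $g$ is equivariant for the big torus $T=(\C^{*})^{3}$, and $\Z_{r}\subset T$; hence the whole configuration is toric, and $\varphi=g/\Z_{r}$ is the star–subdivision of the cone $\sigma=\R_{\ge 0}\langle e_{1},e_{2},e_{3}\rangle$ of $X$, in the lattice $N=\Z^{3}+\Z\cdot\tfrac1r(a,r-a,1)$, along the ray spanned by the weight vector of $g$; total ramification of $\rho$ forces its primitive generator in $N$ to be $\tfrac1r$ of that weight vector. Substituting $\discr(\widetilde E,\C^{3})=m+n$ into the identity above and imposing that $Y=\widetilde Y/\Z_{r}$ be terminal leaves, after a short toric check, only the possibility $\discr(E,X)=1/r$ with weight vector $(a,r-a,1)$ (up to permuting coordinates); that is, $\varphi$ is the weighted blow-up of $(p\in X)$ with weights $(a,r-a,1)$, as claimed. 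I expect the main obstacle to be the middle steps — controlling $\widetilde Y$ across the divisorial ramification locus of $\rho$ ($\Q$-factoriality, terminality, and extremality of $g$) and excluding positive–dimensional centres — since, granting the classification of divisorial contractions to a smooth threefold point, the concluding descent is routine toric bookkeeping.
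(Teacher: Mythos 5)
You should first note that the paper offers no proof of this statement at all: it is quoted verbatim from Kawamata, whose own argument is a short direct one — the exceptional divisor of the $(a,r-a,1)$ weighted blow up realises the minimal discrepancy $1/r$ over the germ $1/r(a,r-a,1)$, a comparison on a common resolution using that $-E$ is $\varphi$-ample forces the valuation of $E$ to coincide with that divisor, and uniqueness of the extraction of a fixed valuation with $-E$ relatively ample then identifies $Y$ with the weighted blow up. Your route (index-one cover plus the classification of divisorial contractions to a smooth point, then toric descent) is genuinely different; it is not circular, but it outsources the heavy lifting to Kawakita's smooth-point theorem, which is later and much deeper than the statement being proved, and the glue you defer is exactly where the content — and the problems — lie.

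Concretely: (i) for the cover you need $\widetilde E$ irreducible, $\rho$ totally ramified along $\widetilde E$, and $\widetilde Y$ terminal, $\Q$-factorial, with $g$ extremal; none of this is automatic. The $\mu_r$-torsor on $Y\setminus E$ may extend across the generic point of $E$, in which case $\rho$ is unramified in codimension one and the exceptional locus upstairs can split into up to $r$ components, so $g$ is not a divisorial contraction with irreducible exceptional divisor and the smooth-point classification simply does not apply; and terminality and $\Q$-factoriality do not ascend along a finite cover ramified along a divisor without a genuine argument. (ii) Your exclusion of $\dim\Gamma=1$ rests on a claim that is not correct as stated: $Y$ is not known to be the $\Z_r$-quotient of the blow up of a smooth invariant curve (it is only generically the blow up of $\Gamma$, and $\Gamma$ need not be smooth), and even for that model the fixed locus of the lifted action sits inside the fibre over the origin, so the quotient can have only isolated singularities — no contradiction with terminality of $Y$ follows. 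This exclusion is a substantive part of Kawamata's theorem and requires the discrepancy/hyperplane-section analysis, not the shortcut you propose. (iii) Even granting Kawakita, the weights $(1,m,n)$ are taken with respect to some analytic coordinates; you must still show these can be chosen $\Z_r$-semi-invariant before declaring the configuration toric and doing the lattice computation in $N=\Z^3+\Z\cdot\tfrac1r(a,r-a,1)$. As it stands the proposal is a plausible programme, but each of these three points is a real gap.
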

We refer to such operation as ``Kawamata blow up''.

\subsection{The strategy}
The variety $X$ is embedded in $\PP$. We aim to find a toric variety with a divisorial contraction to $\PP$, such that the restriction of this map to the birational transform of $X$ is a divisorial contraction to a point $X$ and is (locally) the Kawamata blow up we are after. The toric variety will have rank 2, the rank of its Picard group. Then we run the $2$-ray game on it, following \cite{BZ} and \cite{ahm}. Next we check whether 
this $2$-ray game restricts to a $2$-ray game on the $3$-fold under study, if so we check to see if it is a Sarkisov link. In similar situations, we blow up other Mori fibre space $3$-folds embedded in weighted projective spaces or rank 2 toric varieties, and follow this instructions. When the rank of the ambient toric variety is 2 we use techniques of \cite{Ahm-3}, in order to realise the rank 3 toric variety after the blow up and the 2-ray game played on it.

\section{The birational maps}

\subsection{Blowing up the index two point}\label{first}

The fan of $\PP$, as a toric variety, consists of seven 1-dimensional 
rays $\{\rho_i\}$ in $\Z^6$, forming a complete fan with six 
(top dimensional) cones $\sigma_i=<\rho_1,\dots,\widehat{\rho_i},
\dots,\rho_7>$, with a single relation between the rays 
\[\rho_1+\rho_2+\rho_3+\rho_4+2\rho_5+2\rho_6+3\rho_7=0,\]
where the coefficients are indicated by the weights that define $\PP$.
Adding a new ray and performing the consequent subdivision results in a blow up of this variety. We aim to blow up the point $p_y$, hence the new ray should be in the cone $\sigma_5$, see \cite[\S 2.6]{fulton}. Let the new ray be $\rho_0$ and the blow up variety denoted by $\mathfrak{X}$. By what we said before, some multiple of $\rho_0$ can be written as the positive sum of other rays other than $\rho_5$. Now, we explain how to decide the coefficients in this relation. The Cox ring (set of relations) of $\mathfrak{X}$ is the polynomial ring with eight variables $u,\y,\x,\x_1,\x_2,\x_3,\y_1,\x$ associate to the matrix below. 

\[\left(\begin{array}{cccccccc}
\rho_0&\rho_5&\rho_1&\rho_2&\rho_3&\rho_4&\rho_6&\rho_7\\
0&2&1&1&1&1&2&3\\
-\omega&0&\omega_1&\omega_2&\omega_3&\omega_4&
\omega_6&\omega_7\end{array}\right)\]
Each column of this matrix indicates a ray in the fan, or represents a variable in the Cox ring. The numerical rows of the matrix represent the relations between the rays, or equivalently the numerical columns represent the bi-degree of the variables in the homogenous coordinate ring (Cox ring) of~$\mathfrak{X}$. See \cite{cox} for the basic theory and explanation.

Let the new variable, associate to the ray $\rho$, be $u$. The GIT chambers of this toric variety has the following shape:
\[\xygraph{
!{(0,0) }="a"
!{(1.8,0) }*+{\y}="b"
!{(0,-1.2) }*+{u}="c"
!{(1.8,0.5) }*+{\x}="d"
!{(1.8,0.8) }*+{\x_1}="e"
!{(1.8,1.1) }*+{\x_2}="f"
!{(1.8,1.4) }*+{\x_3}="g"
!{(1.8,1.7) }*+{\y_1}="h"
!{(1.8,2) }*+{\z}="i"
"a"-"b"  "a"-"c" "a"-"d" "a"-"e" "a"-"f" "a"-"g" "a"-"h" "a"-"i"
}  \]
Note that we do not claim $\x,\x_1,\x_2,\x_3,\y_1,\z$ are in that order. What is clear is that they all fall in that side of $\y$ in comparison with $u$, because $\omega$ and $\omega_i$s are all strictly positive. The blow up $\varphi\colon\mathfrak{X}\rightarrow\PP$ is equivalent to taking the birational map defined by the linear system $|\mathcal{O}(1,0)|$, in other words
\[(u,\y,\x,\x_1,\x_2,\x_3,\y_1,\z)\in\mathfrak{X}\mapsto(u^\frac{\omega_1}{\omega}\x,
u^\frac{\omega_2}{\omega}\x_1,u^\frac{\omega_3}{\omega}\x_2,u^\frac{\omega_4}{\omega}\x_3,\y,u^\frac{\omega_6}{\omega}\y_1,u^\frac{\omega_7}{\omega}\z)\in\PP\]
In order to determine the values of $\omega$ and $\omega_i$ we use Theorem~\ref{kawamata}. As mentioned before this map locally near the point $p_y$ on $X$ is sought to be $u^\frac{1}{2}\x,u^\frac{1}{2}\x_1,u^\frac{1}{2}\x_2$. Hence $\omega=2$ and $\omega_1=\omega_2=\omega_3=1$. On the other hand, replacing these in the Equations~\ref{eq2} we aim to cancel the highest possible power of $u$ in each equation. This indicates that $\omega_4=3, \omega_6=4$ and $\omega_7=5$. Note that this changes the Cox ring of $\mathfrak{X}$ to

\[\left(\begin{array}{ccccccccc}
u&\y&\x&\x_1&\x_2&\z&\y_1&\x_3\\
0&2&1&1&1&3&2&1\\
-2&0&1&1&1&5&4&3\end{array}\right)\]
However, this matrix defines a stacky fan and not a toric fan (see \cite{borisov, fantechi, Ahm-3}. However, the suitable toric variety can be obtain by well forming this matrix (see \cite{Ahm-3}) by subtracting the first row from the second and then dividing by $2$, which essentially removes a factor of $2$ from the determinant of all $2\times 2$ minors of the matrix above. Hence, the Cox ring of the toric variety $\mathfrak{X}$ is
\[\left(\begin{array}{cccccccc}
u&\y&\x&\x_1&\x_2&\z&\y_1&\x_3\\
0&2&1&1&1&3&2&1\\
-1&-1&0&0&0&1&1&1\end{array}\right)\]
with irrelevant ideal $I=(u,\y)\cap(\x,\x_1,\x_2,z,\y_1,\x_3)$. Note that the map $\mathfrak{X}\rightarrow\PP$ has not changed.

The birational transform of $X$ under this map defines a $3$-fold $X^\prime_1$, that is the Kawamata blow up of $X$ at the point $p_y$. It is a codimension 3 subvariety of $\mathfrak{X}$ defined by the vanishing of the five equations
\begin{equation}\label{eq2-bl}\begin{array}{cccl}
\circled{1}& \Pf_{1234}:&& \y\z=AD-(u\y_1+C)B\\
\circled{2}&\Pf_{1235}:&& \y\y_1=-\x A-\x_1B\\
\circled{3}&\Pf_{1245}:&& \y\x_3=\x(u\y_1+C)+\x_1D\\
\circled{4} &\Pf_{1345}:&& \x_3A=-\y_1(u\y_1+C)+\x_1\z\\
\circled{5} &\Pf_{2345}:&& \x_3B=-\y_1D-\x\z\end{array}\end{equation}
where $A,B,C,D$ are the same as before with the following replacements
\[x\mapsto\x\quad,\quad x_1\mapsto\x_1\quad,\quad x_2\mapsto\x_2\quad,\quad x_3\mapsto u\x_3\]

Running the 2-ray game on $\mathfrak{X}$ is essentially the variation of Geometric Invariant Theory (vGIT) in the Mori chambers of $\mathfrak{X}$, given by
\[\xygraph{
!{(0,0) }="a"
!{(0,-1.2) }*+{u}="b"
!{(2.1,-1.2) }*+{\y}="c"
!{(1.8,0) }*+{\x,\x_1,\x_2}="d"
!{(3.4,1.2) }*+{\z}="e"
!{(2.1,1.2) }*+{\y_1}="f"
!{(1.2,1.2) }*+{\x_3}="g"
"a"-"b"  "a"-"c" "a"-"d" "a"-"e" "a"-"f" "a"-"g"
}  \]
The 2-ray game follows the diagram

\[\xymatrixcolsep{2pc}\xymatrixrowsep{3pc}
\xymatrix{
&\mathfrak{X}\ar_{\text{blow up}}[ld]\ar^{f_0}[rd]\ar^{\text{flip}}@{-->}[rr]&&\mathfrak{X}_1\ar_{g_0}[ld]\ar^{f_1}[rd]\ar^{\text{flip}}@{-->}[rr]&&\mathfrak{X}_2\ar_{g_1}[ld]\ar^{\text{blow up}}[rd]&\\
\PP&&\mathfrak{T}_0&&\mathfrak{T}_1&&\PP^\prime
}\]
The map $f_0$ contracts the locus $(\z=\y_1=\x_3=0)\subset\mathfrak{X}$, that is isomorphic to the 3 dimensional scroll
\[\left(\begin{array}{ccccc}
u&\y&\x&\x_1&\x_2\\
0&2&1&1&1\\
-1&-1&0&0&0\end{array}\right),\]
 to the $\PP_{\x:\x_1:\x_2}^2\subset\mathfrak{T}_0$. The map $g_0$, on the other hand, contracts the 4 dimensional scroll defined by $(u=\y=0)\subset\mathfrak{X}_1$ to the same $\PP^2$. Therefore, the birational map $\mathfrak{X}\dashrightarrow\mathfrak{X}_1$ is a $(-1,-1,1,1,1)$ flip above a $\PP^2$. Similarly, the birational map $\mathfrak{X}_1\dashrightarrow\mathfrak{X}_2$ can be seen as the flip $(3,5,1,1,1,-1,-2)$, that is the contraction of a $\PP(1,1,1,3,5)\subset\mathfrak{X}_1$ to a point in $\mathfrak{T}_1$ by $f_1$ and extraction of a $\PP(1,2)\subset\mathfrak{X}_2$ from that point by $g_1$. The last map is simply the contraction of the divisor $(\x_3=0)\subset\mathfrak{X}_2$. It can be written explicitly by the linear system $|\mathcal{O}(2,1)|$, that is
 \[(u,\y,\x,\x_1,\x_2,\z,\y_1,x_3)\in\mathfrak{X}_2\mapsto(\y_1,\x_3\x,x_3\x_1,\x_3\x_2,x_3^2u,\x_3\z,x_3^4\y)\in\PP(1,1,1,1,1,2,3)\]
 
 \begin{theorem}\label{2ray-1}The 2-ray game on $\mathfrak{X}$ restricts to a 2-rag game on $X^\prime_1$. In particular, $X$ is birational to $Y$, a complete intersection of two cubics in $\PP(1,1,1,1,1,2)$, and the birational map between them consists of 11 flops followed by a $(3,1,-1,-2)$ flip.\end{theorem}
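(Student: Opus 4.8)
The plan is to carry out the $2$-ray game on the rank-$2$ toric variety $\mathfrak{X}$ described above, and then verify at every step that the induced birational transform of $X'_1$ inherits the same sequence of contractions, flips/flops, and finally a divisorial contraction, so that the toric game restricts to a genuine $2$-ray game on the $3$-fold. Concretely I would proceed through the four arrows of the displayed diagram $\mathfrak{X}\dashrightarrow\mathfrak{X}_1\dashrightarrow\mathfrak{X}_2\to\PP'$ one at a time.

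First I would analyse the first small contraction $f_0\colon\mathfrak{X}\to\mathfrak{T}_0$, which contracts the scroll $(\z=\y_1=\x_3=0)$ to $\PP^2_{\x:\x_1:\x_2}$. The key point is to intersect this scroll with $X'_1$ using the equations~(\ref{eq2-bl}): setting $\z=\y_1=\x_3=0$ in $\circled{1}$--$\circled{5}$ leaves $AD=CB$ together with $\x A+\x_1 B=0$, $\x C+\x_1 D=0$; for general $A,B,C,D$ this cuts the $\PP^2$ down to a finite set of points (or checks it is empty over the generic point), so that the restriction of $f_0$ to $X'_1$ contracts a union of curves to points, i.e. is a flopping contraction. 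One must check on each such curve that it is a $(-1,-1)$-curve, which follows from the $(-1,-1,1,1,1)$ structure of the toric flip combined with the adjunction/local computation on $X'_1$. The count of such curves is exactly the number advertised ($11$ flops); I would get this number by a dimension/degree computation, e.g. by computing the length of the subscheme of $\PP^2$ cut out by the equations just displayed, which should come out to $11$ once the degrees of $A,B,C,D$ ($3,3,2,2$) are fed in. Then $\mathfrak{X}_1\dashrightarrow\mathfrak{X}_2$ restricts similarly; here the toric flip contracts a $\PP(1,1,1,3,5)$ to a point, and one checks via the equations that its intersection with the $3$-fold is a single curve undergoing a $(3,1,-1,-2)$ flip (the claimed last flip in the statement), again by restricting~(\ref{eq2-bl}) to the relevant coordinate locus and reading off the local weights.

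Finally, for the last divisorial contraction $g_1\colon\mathfrak{X}_2\to\PP'=\PP(1,1,1,1,1,2,3)$ given by $|\mathcal{O}(2,1)|$, I would show that the divisor $(\x_3=0)$ meets $X'_1$ in a divisor that is contracted to a codimension-$2$ image, and then identify the birational transform of $X$ inside $\PP'$ explicitly. Substituting the map $(u,\y,\x,\x_1,\x_2,\z,\y_1,\x_3)\mapsto(\y_1,\x_3\x,\x_3\x_1,\x_3\x_2,\x_3^2 u,\x_3\z,\x_3^4\y)$ into~(\ref{eq2-bl}) and eliminating $\x_3$, the five Pfaffians should collapse: three of them become consequences of the others and two independent cubic relations in $\PP(1,1,1,1,1,2)$ survive, exhibiting $Y=Y_{3,3}\subset\PP(1,1,1,1,1,2)$ as the complete intersection of two cubics claimed in the statement; one should also verify $Y$ is quasi-smooth, $\Q$-factorial and terminal for general $A,B,C,D$, so that the endpoint is a legitimate Mfs (here a Fano $3$-fold of Picard rank $1$). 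Along the way one verifies the $\Q$-factorial/terminal hypotheses are preserved through the flops and the flip, which for flops is automatic and for the $(3,1,-1,-2)$ flip follows since it is a genuine terminal flip.

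The main obstacle I expect is the middle part: proving that the toric small contractions genuinely \emph{restrict} to small contractions (and not to divisorial contractions or something degenerate) on $X'_1$, and pinning down the exact number $11$ of flopping curves. This requires showing the relevant scroll (or weighted projective subspace) of the ambient toric variety meets $X'_1$ in the expected dimension — essentially a Bertini-type genericity argument for the forms $A,B,C,D$ — and then a careful local analytic computation identifying each contracted curve as a flopping $(-1,-1)$-curve with the right normal bundle; the enumeration of the $11$ curves is a concrete intersection-theory calculation on $\PP^2$ (or on the relevant scroll) using the degrees $(3,3,2,2)$. Once these local verifications are in place, the global statement that the $2$-ray game on $\mathfrak{X}$ restricts to one on $X'_1$, with the stated factorisation into $11$ flops and a $(3,1,-1,-2)$ flip ending at $Y_{3,3}$, follows by assembling the steps.
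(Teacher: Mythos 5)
Your proposal is correct and follows essentially the same route as the paper: restrict each step of the toric $2$-ray game to the $3$-fold using the equations~(\ref{eq2-bl}) and local elimination of tangent variables, count the flopping curves by the length of the determinantal subscheme of $\PP^2_{\x:\x_1:\x_2}$ cut out by $\x A+\x_1B$, $\x C+\x_1D$, $AD-CB$ (the paper invokes Hilbert--Burch to get the $11$ points), identify the $(3,1,-1,-2)$ flip from the local weights, and then eliminate one variable so that two of the Pfaffians survive as the cubics defining $Y_{3,3}\subset\PP(1,1,1,1,1,2)$. Two small inaccuracies worth fixing but not affecting the argument: at the last step the variable that gets globally eliminated is $\y$ (the weight-$3$ coordinate of $\PP(1,1,1,1,1,2,3)$), not $\x_3$, and the contracted divisor maps to a point of $Y$ (the $cA_1$ point $p_{y_1}$), not to a codimension-$2$ locus.
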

\begin{proof} Substituting $u=\y=\z=\y-1=\x_3=0$ in the Equations~\ref{eq2-bl}, and then solving in $\PP^2_{\x:\x_1:\x_2}$ we obtain 11 points, by Hilbert-Burch theorem. Note that setting $\z=\y-1=\x_3=0$ gives the same equations, hence $f_0$ restricted to $Y$ contracts 11 lines to 11 points (on $\PP^2$). On the other hand, near any of these points the variable $\z$ is a tangent, by $\circled{4}$ and $\circled{5}$. Therefore this variable can be eliminated in a neighborhood of any point in $\mathfrak{X}_1$ that maps to one of those 11 points. Hence $g_0$ restricted to the $3$-fold (defined by Equations~\ref{eq2-bl}) is the contraction of a $\PP^1_{\y_1:\x_3}$. Therefore, the first toric flip restricts to 11 flops $(1,1,-1,-1)$ on $X^\prime_1$, to a $3$-fold $X^\prime_2$. At the next step, the tangency of variables $\y,\x$ and $\x_1$ near $p_z\in\mathfrak{T}_1$ (the image of contractions from $f_1$ and $g_1$) allows one to eliminate this variables locally, and hence the toric flip restricts to a $3$-fold flip of type $(3,1,-1,-2)$, to a $3$-fold $X^\prime_3$. In particular, the singular point $p_{u\z}$ of type $1/3(1,1,2)$ is replaced by the $1/2(1,1,1)$ point $p_{\x_3\z}$ after the flip. The last step of the game contracts the divisor $(\x_3=0)$ to the point $p_{\y_1}\in\PP^\prime$. Restricting this map to Equations~\ref{eq2-bl} shows that the variable $\y$ can be globally eliminated on this $3$-fold. After this elimination one can see that the transform of $\circled{1}$ and $\circled{2}$ are in the ideal generated by $\circled{4}$ and $\circled{5}$. Hence the we have a divisorial contraction from $X^\prime_3$ to  a $3$-fold $Y$ defined by the vanishing of
\[A+y_1(uy_1+C)-x_1z=B+xz+y_1D=0\]
in the weighted projective space $\PP(1,1,1,1,1,2)$, with variables $u,x,x_1,x_2,y_1,z$, where $A$ and $B$ are general cubics, and $C$ and $D$ are general quadrics, in variables $u,x_1,x_2$. In particular $Y$ has two singular points: $p_z$ of quotient type $1/2(1,1,1)$ and $p_{y_1}$, a $cA_1$ with local analytic isomorphism 
\[(p_{y_1}\in Y)\cong (0\in(\x \z+\x_1^2+\x_2^2=0)\subset\C^4)\] 
 \end{proof}
 
\subsection{Blowing up the index three point}\label{second}
In this section we perform a similar construction for blowing up the point $p_z$, that has singularity of type $1/3(1,1,2)$. Recall that, $X$ is defined by the vanishing of 
\begin{equation}\label{eq3}\begin{array}{ccl}
\circled{1}&& yz=AD-(y_1+C)B\\
\circled{2} && x_1z=x_3A+y_1(y_1+C)\\
\circled{3}&& xz=-y_1D-x_3B\\
\circled{4}&& yy_1=-xA-x_1B\\
\circled{5}&& yx_3=x(y_1+C)+x_1D
\end{array}\end{equation}

Similar to the previous subsection, the Cox ring of the sixfold $\mathfrak{X}^\prime$, the toric blow up is
\[\left(\begin{array}{ccccccccc}
w&\z&\x_2&\x_3&\y_1&\y&\x&\x_1\\
0&3&1&1&2&2&1&1\\
-1&-1&0&0&0&1&1&1
\end{array}\right)\]

The $3$-fold $X^{\prime\prime}_1$, the Kawamata blow up of $X$ at the point $p_z$, is defined by

\begin{equation}\label{eq3-bl}\begin{array}{ccl}
\circled{1}&& \y\z=AD-(\y_1+C)B\\
\circled{2} && \x_1\z=\x_3A+\y_1(\y_1+C)\\
\circled{3}&& \x\z=-\y_1D-\x_3B\\
\circled{4}&& \y\y_1=-\x A-\x_1B\\
\circled{5}&& \y\x_3=\x(\y_1+C)+\x_1D
\end{array}\end{equation}
where $A,B,C,D$ are as before with replacements
\[x\mapsto w\x\quad,\quad x_1\mapsto w\x_1\quad,\quad x_2\mapsto \x_2\quad,\quad x_3\mapsto \x_3\]

The first step of the ambient 2-ray game restricts to 7 flops, and maps to $X^{\prime\prime}_2$, and follows at the next step by a Francia flip $(2,1,-1,-1)$, mapping to $\widetilde{Z}$. At the end, we have a fibration over $\PP^1_{\x:\x_1}$ from $\widetilde{Z}$.  Using $\circled{2}$ and $\circled{3}$ we can eliminate the variable $\z$ above all fibres, therefore the new model is the complete intersection of
\[\y\y_1=-\x A-\x_1B\quad\text{ and }\quad\y\x_3=\x(\y_1+C)+\x_1D\]
as two hypersurfaces in the toric variety $\mathfrak{X}^{''}$
\[\left(\begin{array}{ccccccc}
w&\x_2&\x_3&\y_1&\y&\x&\x_1\\
1&1&1&2&1&0&0\\
-2&-1&-1&-2&0&1&1
\end{array}\right)\]
Note that this matrix is obtain by removing the variable $\z$ and a re-scale using $\Sl(2,\Z)$.

Each fibre is a complete intersection of a quadric with a cubic in $\PP(1,1,1,1,2)$, in particular, the generic fibre is a cubic surface. Furthermore, this model has only a singular point of type $1/2(1,1,1)$, at the point $p_{y_1x_1}$.

The following follows from the construction above.
\begin{theorem}The 2-ray game on $\mathfrak{X}'$ restricts to a 2-rag game on $X^{''}$. In particular, $X$ is birational to $\tilde{Z}\subset\mathfrak{X}^{''}$, a fibration over $\PP^1$ with fibres being complete intersection of a quadric and a cubic in $\PP(1,1,1,1,2)$, and the birational map between them consists of $7$ flops followed by a $(2,1,-1,-1)$ Francia flip.\end{theorem}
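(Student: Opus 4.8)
The plan is to mirror the proof of Theorem~\ref{2ray-1} step by step, since the construction in \S\ref{second} is entirely parallel. First I would verify that the ambient $2$-ray game on $\mathfrak{X}'$ described by the Cox ring
\[\left(\begin{array}{ccccccccc}
w&\z&\x_2&\x_3&\y_1&\y&\x&\x_1\\
0&3&1&1&2&2&1&1\\
-1&-1&0&0&0&1&1&1
\end{array}\right)\]
indeed proceeds as claimed: after the divisorial contraction $\mathfrak{X}'\to\PP$ (the Kawamata blow up of $p_z$, whose weights $(1,1,2)$ are forced by Theorem~\ref{kawamata}), the other extremal ray of $\NNE(\mathfrak{X}')$ contracts the locus $(\y=\x=\x_1=0)$ to a $\PP^2_{\x_2:\x_3:\y_1}$-type stratum, followed by a flip, then a second flip of Francia type $(2,1,-1,-1)$, and finally a morphism to $\PP^1_{\x:\x_1}$. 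This is a routine $\Cox$-ring GIT-chamber computation identical in spirit to the one carried out for $\mathfrak{X}$ in \S\ref{first}.

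Next I would restrict each of these ambient moves to the birational transform $X''_1$ defined by Equations~\ref{eq3-bl}. For the first flip: substituting $w=\z=0$ (and noting $\y=\x=\x_1=0$ is forced on the contracted scroll) into \circled{2} and \circled{3} of~(\ref{eq3-bl}) and solving in $\PP^2_{\x_2:\x_3:\y_1}$ gives, by the Hilbert--Burch/Pfaffian structure, exactly $7$ points; near each such point $\z$ is a tangent variable by \circled{2} and \circled{3}, so it may be eliminated locally, which shows the ambient flip restricts to $7$ simultaneous flops $(1,1,-1,-1)$ on $X''_1$, producing $X''_2$. For the second step one checks similarly that near the relevant toric centre the appropriate variables are tangent, so the ambient flip restricts to a genuine $3$-fold Francia flip of type $(2,1,-1,-1)$, producing $\widetilde{Z}$; this is where the singularity analysis matters, and I would track how the $1/2(1,1,1)$ point appears at $p_{y_1x_1}$ after the flip, exactly as $p_z$ was replaced by a $1/2(1,1,1)$ point in the proof of Theorem~\ref{2ray-1}. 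Finally, over the base $\PP^1_{\x:\x_1}$, equations \circled{2} and \circled{3} let one eliminate $\z$ fibre-wise, and \circled{1} becomes redundant (it lies in the ideal generated by the others, just as \circled{1} and \circled{2} did in Theorem~\ref{2ray-1}), leaving $\widetilde{Z}$ as the complete intersection of
\[\y\y_1=-\x A-\x_1B\quad\text{and}\quad\y\x_3=\x(\y_1+C)+\x_1D\]
in the rank-$2$ toric variety $\mathfrak{X}''$, with generic fibre a cubic surface in $\PP(1,1,1,1,2)$.

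The main obstacle, as in the previous theorem, is the bookkeeping that confirms the ambient $2$-ray game genuinely \emph{restricts} to a $2$-ray game on the $3$-fold at every stage: one must check that the contracted loci on the ambient toric varieties meet $X''_i$ in loci of the expected dimension (so that no unexpected divisorial contraction or extra flip is hidden), that the flip/flop hypotheses (tangency of the appropriate coordinates, hence local eliminability) hold at \emph{every} point of the restricted centre, and that quasi-smoothness and terminality are preserved throughout so that we remain in the Mori category. Counting the flops correctly (the number $7$) via the Hilbert--Burch resolution of the ideal of the restricted centre is the one genuinely computational input. Once these verifications are in place, the statement follows immediately by reading off the chain of restricted moves, so I would present the proof as: "This follows from the construction above," supplemented by the flop count and the tangency checks sketched here.
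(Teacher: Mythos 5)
Your strategy is exactly the one the paper intends (its proof is ``follows from the construction above'', and that construction is the step-by-step analogue of Theorem~\ref{2ray-1}, which you mirror), and your flop count is right: on the contracted locus $(\y=\x=\x_1=0)$ the forms $A,B,C,D$ depend only on $\x_2,\x_3$, equations \circled{2} and \circled{3} give $\x_3A+\y_1(\y_1+C)=0$ and $\y_1D+\x_3B=0$, and eliminating $\y_1$ produces the degree~$7$ form $AD^2+B^2-BCD$ on $\PP^1_{\x_2:\x_3}$ (with \circled{1} dependent on these), hence $7$ centres --- note they live in $\PP(1,1,2)_{\x_2:\x_3:\y_1}$, not $\PP^2$, and the ambient second flip is not itself of type $(2,1,-1,-1)$; that is only its restriction to the $3$-fold.

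The genuine flaw is your tangency step for the flops. You claim that near each of the $7$ points $\z$ is a tangent variable ``by \circled{2} and \circled{3}''. In those equations the coefficients of $\z$ are $\x_1$ and $\x$, which vanish at the flopping centres (these lie on $w=\z=\y=\x=\x_1=0$), so $\z$ cannot be eliminated there; indeed it must not be, since $\PP^1_{w:\z}$ is precisely the curve being contracted on the $\mathfrak{X}'$-side. The fibre-wise elimination of $\z$ via \circled{2}, \circled{3} is legitimate only at the last step, over the base $\PP^1_{\x:\x_1}$ where $(\x,\x_1)\neq(0,0)$. The correct analogue of the argument in Theorem~\ref{2ray-1} is that near each of the $7$ points the variable $\y$ is tangent, by \circled{4} and \circled{5} (its coefficients are $\y_1$ and $\x_3$, and for general $A,B,C,D$ no centre lies on $\y_1=\x_3=0$, since $(1:0:0)$ fails \circled{1}). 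Eliminating $\y$, the extracted $\PP^2_{\y:\x:\x_1}$-fibres on the $\mathfrak{X}'_1$-side meet the transform of the $3$-fold in lines --- equivalently, the two linear forms \circled{4}, \circled{5} in $(\y,\x,\x_1)$ have $2\times2$ minors equal to \circled{1}, \circled{2}, \circled{3}, hence are proportional exactly at the $7$ points --- so the ambient flip restricts to $7$ flops of type $(1,1,-1,-1)$. With this correction, and the analogous tangency of $\z,\y_1,\x_3$ (via \circled{1}, \circled{4}, \circled{5}) at the centre of the second wall giving the $(2,1,-1,-1)$ Francia flip, your argument matches the paper's.
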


\section{The relation among the models}
In this section we obtain two other models, both fibered over $\PP^1$ with cubic surface fibres, from $Y$ and $\widetilde{Z}$, and show that they are isomorphic. It turns out that these models are square birational to $\widetilde{Z}/\PP^1$.

\subsection{Strict Mori fibrations and square birationality}
Let us first construct a cubic surface fibration by an ESL from $Y$.

\begin{lm}\label{dP3-Y}The blow up of the $1/2(1,1,1)$ singular point in $Y\subset\PP(1,1,1,1,1,2)$ has a Sarkisov link of type $\I$ to a cubic surface fibration over $\PP^1$.\end{lm}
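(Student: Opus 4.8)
The plan is to run the $2$-ray game on the toric blow-up of $Y$ at its $\frac{1}{2}(1,1,1)$ point $p_z$, exactly as in \S\ref{first} and \S\ref{second}, and check that it restricts to a Sarkisov link on $Y$ itself. First I would recall that $Y\subset\PP(1,1,1,1,1,2)$ with coordinates $u,x,x_1,x_2,y_1,z$ is the complete intersection of a cubic and a cubic, and that by Theorem~\ref{2ray-1} it has exactly two singular points: the quotient point $p_z$ of type $\frac{1}{2}(1,1,1)$ and the $cA_1$ point $p_{y_1}$. Since the link is to be initiated at the \emph{quotient} singularity $p_z$, by Kawamata's Theorem~\ref{kawamata} the only divisorial contraction extracting a divisor over $p_z$ is the weighted blow-up with weights $\frac{1}{2}(1,1,1)$; I would realise this as a subdivision of the fan of $\PP(1,1,1,1,1,2)$ by inserting a new ray in the cone over the $p_z$ coordinate point, producing a rank $2$ toric ambient $\mathfrak{Y}$ whose Cox ring is a $2\times 7$ matrix of the shape seen before, with the new variable $v$ in bidegree $(0,-1)$ (after well-forming by subtracting rows and dividing by $2$ as in \S\ref{first}). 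The tangent-variable count near $p_z$ on $Y$ should confirm that the restriction of $\varphi\colon\mathfrak{Y}\to\PP(1,1,1,1,1,2)$ to the birational transform $\widetilde{Y}_1$ of $Y$ is precisely this Kawamata blow-up.

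Next I would write down the Mori chamber decomposition (GIT fan) of $\mathfrak{Y}$ and follow the forced chain of moves: the blow-up wall, then a sequence of toric flips/flops, then the final extremal contraction on the far chamber. On the toric side this is a mechanical vGIT computation; the point is to check at each wall that the toric operation \emph{restricts} to an operation in the Mori category on the $3$-fold. As in the proofs above, this is done by the tangent-variable trick: at the flopping/flipping locus one identifies a coordinate that is a tangent direction (using the two defining cubic equations of $Y$, suitably transformed under $x\mapsto v x$, $x_1\mapsto v x_1$, etc.), eliminates it locally, and reads off the type of the induced $3$-fold flip; one also checks that the $cA_1$ point $p_{y_1}$ does not obstruct the game (it lies away from the initial centre, so it is simply carried along). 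The far end of the $2$-ray game should be a morphism to $\PP^1$ rather than a divisorial contraction — this is what makes it a type $\I$ link — and restricting the last map to the transform of $Y$ should exhibit the generic fibre as a cubic surface (after eliminating one further variable globally, the way $\y$ was eliminated in the proof of Theorem~\ref{2ray-1}). Verifying that the total space is $\Q$-factorial, terminal, with relative Picard number $1$, and that $-K$ is relatively ample, then certifies a genuine Mfs, so the whole chain is an ESL of type $\I$.

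The main obstacle I anticipate is the same one that is implicit in Theorem~\ref{2ray-1}: controlling what happens to the $cA_1$ point $p_{y_1}$ (and any new singularities the weighted blow-up or the flips create) throughout the game, and confirming that terminality is preserved at every stage so that the chain really lives in the Mori category. A secondary subtlety is ensuring the degree bookkeeping in the weighted blow-up is done correctly — i.e.\ choosing the exponents of the new variable $v$ in the transformed equations so as to cancel the highest possible power of $v$, exactly as $\omega_4=3,\omega_6=4,\omega_7=5$ were forced in \S\ref{first} — since an error there changes the model. Once the toric $2$-ray game and its restriction are set up, identifying the cubic surface fibration at the end and checking it is a strict Mfs over $\PP^1$ is routine, and the link type $\I$ follows because the initial move is a divisorial contraction (blow-up of a point) and the terminal move is a fibration.
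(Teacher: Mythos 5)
Your plan follows essentially the same route as the paper's proof: realise the Kawamata blow-up of $p_z\in Y$ inside a rank~$2$ toric ambient via its Cox ring, run the toric $2$-ray game and check (by the tangent-variable/elimination trick) that it restricts to the $3$-fold, ending in a fibration over $\PP^1_{\x:\x_1}$ whose fibres become cubic surfaces after eliminating $\z$, which is exactly how the paper obtains the model $Z$ (there the intermediate steps are nine flops followed by the fibration). So the proposal is correct in approach; it only leaves implicit the explicit Cox ring, the count of flops, and the fibrewise elimination that the paper writes out.
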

\begin{proof}Consider the toric variety with Cox ring
\[\left(\begin{array}{ccccccc}
v&\z&u&\x_2&\y_1&\x&\x_1\\
0&2&1&1&1&1&1\\
-1&-1&0&0&0&1&1
\end{array}\right)\]
With irrelevant ideal $I=(v,\z)\cap(u,\x_2,\y_1,\x,\x_1)$.

Similar to constructions in previous section, it follows that the $3$-fold $Y^\prime_1$ defined by the vanishing of
\[A+y_1(uy_1+C)-x_1z=B+xz+y_1D=0\]
in this toric variety is the Kawamata blow up of $Y$ at the point $p_z$, where $A,B,C,D$ are two cubic and two quadrics in $v\x,v\x_1,\x_2,u$. Furthermore, the 2-ray game of the ambient space restricts to a 2-ray game on $Y^\prime_1$, which consists of nine flops followed by a fibration to $\PP^1_{\x:\x_1}$. Note that above every point in the base of this fibration the variable $\z$ can be eliminated, using the ratio
\[\z=\frac{A+\y_1(u\y_1+C)}{-\x}=\frac{B+\y_1D}{\x_1}\]
Hence the new variety, $Z$, can be viewed as the hypersurface defined by
\[\x_1(A+\y_1(u\y_1+C))+\x(B+\y_1D)=0\]
in toric variety with Cox ring
\[\left(\begin{array}{cccccc}
v&u&\x_2&\y_1&\x&\x_1\\
1&1&1&1&0&0\\
-1&0&0&0&1&1
\end{array}\right)\]
and irrelevant ideal $I=(\x,\x_1)\cap(v,u,\x_2,\y_1)$. The $3$-fold $Z$ is a fibration of cubic surfaces, over $\PP^1_{\x:\x_1}$ with a singular point $p_{\y_1\x}$, which is a $cA_1$ singularity.
\end{proof}

\begin{lm}\label{dP3-Z}The blow up of the $1/2(1,1,1)$ singular point in $\widetilde{Z}$ has a Sarkisov link of type $\II$ to another cubic surface fibration over $\PP^1$. Furthermore, these two models are square birational.\end{lm}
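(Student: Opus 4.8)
The plan is to repeat, essentially verbatim, the machine used in Lemma~\ref{dP3-Y} and in \S\ref{first}--\S\ref{second}: realise the Kawamata blow up of the $\frac{1}{2}(1,1,1)$ point $p_{\y_1\x_1}\in\widetilde{Z}$ as a codimension~$2$ complete intersection in a rank~$3$ toric variety, run the $2$-ray game there, and restrict it step by step to the $3$-fold. Recall that $\widetilde{Z}$ is cut out by $\y\y_1=-\x A-\x_1B$ and $\y\x_3=\x(\y_1+C)+\x_1D$ in the rank~$2$ toric variety $\mathfrak{X}^{''}$, is fibred over $\PP^1_{\x:\x_1}$, and that $p_{\y_1\x_1}$ lies in the fibre over $(\x:\x_1)=(0:1)$. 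First I would enlarge the fan of $\mathfrak{X}^{''}$ by one ray $\rho_v$, lying in the cone corresponding to $p_{\y_1\x_1}$ and carrying a new Cox coordinate $v$; the two weights of $v$ are forced exactly as in \S\ref{first}, one of them by Theorem~\ref{kawamata} (the contraction must restrict, analytically near $p_{\y_1\x_1}$, to the weighted blow up with weights $(1,1,1)$), the other by demanding that the highest possible power of $v$ cancel from both defining equations after the induced substitution on $\x,\x_1,\x_2,\x_3$, and one then well-forms the resulting stacky fan as in \cite{Ahm-3}. This produces a rank~$3$ toric variety, say $\mathfrak{W}$, still fibred over $\PP^1_{\x:\x_1}$, together with a $\Q$-factorial terminal codimension~$2$ complete intersection $\widetilde{Z}'_1\subset\mathfrak{W}$ which is the Kawamata blow up $(E\subset\widetilde{Z}'_1)\to(p_{\y_1\x_1}\in\widetilde{Z})$.

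Next I would run the $2$-ray game on $\mathfrak{W}$, i.e.\ vary GIT across the walls of the movable cone of $\mathfrak{W}$ in the style of \cite{BZ,Ahm-3}. I expect the chain to begin with the divisorial extraction $\mathfrak{W}\to\mathfrak{X}^{''}$, to pass through finitely many toric flips --- each restricting on the $3$-fold either to a collection of flops or to a genuine flip, according to which coordinate is a local tangent direction after the previous step --- and to terminate with a toric divisorial contraction, so that the link has the same ``extraction\,--\,flops/flip\,--\,contraction'' shape as the one built in \S\ref{first}. The substantive part is to verify wall by wall that the ambient $2$-ray game restricts to a $2$-ray game on $\widetilde{Z}'_1$: at each small contraction the relevant coordinate should appear linearly in the defining equations after the preceding step and hence be eliminable near the contracted locus (the number of flopped curves being read off by counting solutions of the restricted equations, as in the proof of Theorem~\ref{2ray-1}), and the ambient divisorial contraction at the end should restrict to a divisorial contraction of $\widetilde{Z}'_1$ onto a $3$-fold on which, using the two equations, one variable becomes globally eliminable. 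This last $3$-fold is then a cubic surface fibration $Z'$ over $\PP^1$ carrying a single $cA_1$ point, and, since the base $\PP^1$ is untouched throughout, the link $\widetilde{Z}/\PP^1\dashrightarrow Z'/\PP^1$ is a Sarkisov link of type~$\II$; comparing the equation of $Z'$ with the one obtained in Lemma~\ref{dP3-Y} moreover shows $Z'\cong Z$.

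For the squareness claim I would argue as follows. The centre $p_{\y_1\x_1}$ lies over $(0:1)\in\PP^1$, and in the weight matrix of $\mathfrak{W}$ the two fibration coordinates $\x,\x_1$ play no part in the wall-crossings; consequently the exceptional divisor $E$, every curve flopped or flipped in the middle of the link, and the divisor contracted at the end all map to the single point $(0:1)$. Hence $\widetilde{Z}\dashrightarrow Z'$ is an isomorphism over $\PP^1\setminus\{(0:1)\}$, so the diagram of Definition~\ref{square} commutes with $g=\mathrm{id}_{\PP^1}$ and the induced map on generic fibres is biregular; thus $(\widetilde{Z}/\PP^1)\sim(Z'/\PP^1)$, which is the remaining assertion (and, together with Lemma~\ref{dP3-Y} and $Z\cong Z'$, shows that all three del Pezzo fibrations in play are square birational).

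The hardest point will be the middle step: pinning down the wall structure of the rank~$3$ toric variety $\mathfrak{W}$, the order and nature of the wall-crossings, and --- above all --- checking at \emph{each} wall that the ambient small modification genuinely restricts to the $3$-fold, i.e.\ that the coordinate one wants to eliminate really is a local tangent direction on the relevant birational transform. The well-forming of the stacky fan in the first step and the identification $Z'\cong Z$ with the model of Lemma~\ref{dP3-Y} are the remaining, more routine, technical matters.
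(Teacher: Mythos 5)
Your overall strategy is the same as the paper's: realise the Kawamata blow up of $p_{\y_1\x_1}\in\widetilde{Z}$ inside a rank three toric variety in the style of \cite{Ahm-3}, run the $2$-ray game relative to $\PP^1_{\x:\x_1}$, restrict it wall by wall to the $3$-fold, and deduce squareness from the fact that the whole link takes place over one point of the base. However, what you give is a programme rather than a proof: the content of the paper's argument is precisely the execution of the steps you defer, namely the explicit rank three Cox ring and irrelevant ideal, the passage to the relative weight matrix over the chart $\x_1\neq 0$, the identification of the small wall crossings as exactly six flops (the solutions of $B=D=0$ in $\PP^2_{w:\x_2:\x_3}$), the recognition that the last wall is a divisorial contraction of $(\x=0)$, i.e.\ of the birational transform of the fibre through the blown-up point, to the point $p_{\x^\prime\y}$, and the computation of the resulting model $\overline{Z}$ (after eliminating $\y_1$ it is a hypersurface fibred in cubic surfaces over $\PP^1_{\x^\prime:\x_1}$ with a $cA_1$ point at $p_{\y\x_1}$). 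Since the lemma asserts exactly that the game closes up into a type $\II$ link with this end product, leaving these verifications as ``the hardest point'' leaves the proof incomplete; you have predicted the shape of the link rather than established it. (The identification of the new model with the $Z$ of Lemma~\ref{dP3-Y} is also not part of this lemma; the paper records it separately in Proposition~\ref{iso}.)

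There is also a concrete inaccuracy in your squareness argument: it is not true that the fibration coordinates $\x,\x_1$ ``play no part in the wall-crossings''. The final wall contracts the divisor $(\x=0)$, and the new fibration is over $\PP^1_{\x^\prime:\x_1}$, the exceptional divisor of the initial blow up becoming the fibre over the distinguished point. Your conclusion survives because the contracted divisor is vertical (it is the old fibre over $(0:1)$), so the link is fibrewise away from that single point and the generic fibres are identified biregularly, giving squareness in the sense of Definition~\ref{square} via the canonical identification of $\PP^1_{\x:\x_1}$ with $\PP^1_{\x^\prime:\x_1}$; but the justification must rest on the verticality of the contracted divisor, not on the claim that $\x$ is untouched by the game.
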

\begin{proof}
Let us recall that $\widetilde{Z}$ is the complete intersection of
\[\y\y_1=-\x A-\x_1B\quad\text{ and }\quad\y\x_3=\x(\y_1+C)+\x_1D\]
in the toric variety
\[\left(\begin{array}{ccccccc}
w&\x_2&\x_3&\y_1&\y&\x&\x_1\\
1&1&1&2&1&0&0\\
-2&-1&-1&-2&0&1&1
\end{array}\right)\]
with irrelevant ideal $I=(\x,\x_1)\cap(w,\x_2,\x_3,\y_1,\y)$. It has a singular point of type $1/2(1,1,1)$ at $p_{\x_1\y_1}$. We aim to blow up this point. Using the techniques introduced in \cite{Ahm-3}, we consider the toric variety $T$ of rank three with Cox ring
\[\left(\begin{array}{cccccccc}
w&\x_2&\x_3&\y_1&\y&\x&\x_1&\x^\prime\\
1&1&1&2&1&0&0&0\\
-2&-1&-1&-2&0&1&1&0\\
1&1&1&0&3&2&0&-2
\end{array}\right)\]
with irrelevant ideal $J=(\x,\x_1)\cap(w,\x_2,\x_3,\y_1,\y)\cap(w,\x_2,\x_3,\y,\x)\cap(\x^\prime,\x_1)\cap(\x^\prime,\y_1)$. To recover the map (the blow up) $T\rightarrow\mathfrak{X}^{''}$ from the Cox ring (i.e., a map from rank $3$ to rank $2$) we only need to write down the graded ring orthogonal to the exceptional divisor $(x'=0)$, that is, with respect to the GIT (Mori) chambers of $T$, the morphism that maps coordinates of $T$ to monomials that span the linear subspace $\left<e_1,e_2\right>=<(1,0,0),(0,1,0)>=\{(a,b,0)|(a,b)\neq0\}\subset\Z^3$. In other words, this map is
\[(w,\x_2,\x_3,\y_1,\y,\x,\x_1,\x^\prime)\mapsto(w{\x^\prime}^\frac{1}{2},\x_2{\x^\prime}^\frac{1}{2},\x_3{\x^\prime}^\frac{1}{2},\y_1,\y{\x^\prime}^\frac{3}{2},\x\x^\prime,\x_1)\]
which is, when restricted to $\widetilde{Z}$, the Kawamata blow up of the singular point.
However, note that the well-formed model of the toric ambient space is
\[\left(\begin{array}{cccccccc}
w&\x_2&\x_3&\y_1&\y&\x&\x_1&\x^\prime\\
1&1&1&2&1&0&0&0\\
-2&-1&-1&-2&0&1&1&0\\
0&0&0&-1&1&1&0&-1
\end{array}\right)\]
We aim to play the 2-ray game on this toric variety over the base $\PP^1_{\x:\x_1}$, that is the base of the Mori fibration. The game corresponds to $\Pic(T/\PP^1)$. In order to visualise this we need to get rid of the contribution of the base ($\PP^1$ here) in the Picard group. As $T$ is obtained by blowing up a point in the fibre above the point $p_{x_1}=(0:1)\in\PP^1_{x:x1}$, we can consider the open subset $(x_1\neq 0)\subset\PP^1$ to realise the relative Picard group and play the relative 2-ray game. This is to say, fixing the action of the a $\C^*$ (with respect to the last row of the matrix above) we have a $2\times 7$ matrix that represents the relative Picard group and the corresponding grading, namely:

\[\left(\begin{array}{ccccccc}
\x&\y&w&\x_2&\x_3&\y_1&\x^\prime\\
0&1&1&1&1&2&0\\
1&1&0&0&0&-1&-1
\end{array}\right)\] 
with the ideal $J^\prime=(w,\x_2,\x_3,\y,\x)\cap(\x^\prime,\y_1)$. The two ray game of this toric variety, over the base, restricts to six flops, corresponding to the six points solutions of the general quadric $D$ and general cubic $B$ in $\PP^2_{w:\x_2:\x_3}$. The next step is a divisorial contraction. It is more useful to see this contraction on the global model, i.e. the rank three variety, preserving the base $\PP^1_{x:x_1}$. Note that after flops the toric variety has the same Cox ring with irrelevant ideal $J=(\x,\x_1)\cap(w,\x_2,\x_3,\y_1,\y)\cap(\y,\x)\cap(\x^\prime,\x_1)\cap(\x^\prime,\y_1,w,\x_2,\x_3)$. Rewriting the matrix, using some $\Sl(3,\Z)$ action, in the form
\[\left(\begin{array}{cccccccc}
w&\x_2&\x_3&\y_1&\y&\x&\x_1&\x^\prime\\
1&1&1&2&1&0&0&0\\
-2&-1&-1&-1&-1&0&1&1\\
-1&-1&-1&-3&0&1&0&-1
\end{array}\right)\]
Note that the map corresponding to the wall $\left<e_1,e_2\right>$ in the GIT cone is a morphism that contracts the divisor $(\x=0)$ to the point $p_{\x^\prime\y}$ in the toric variety with Cox ring
\[\left(\begin{array}{ccccccc}
w&\x_2&\x_3&\y_1&\y&\x_1&\x^\prime\\
1&1&1&2&1&0&0\\
-2&-1&-1&-1&-1&1&1
\end{array}\right)\]
with irrelevant ideal $I^\prime=(\x^\prime,\x_1)\cap(w,\x_2,\x_3,\y_1,\y)$. The blow up (contraction) map is
\[(w,\x_2,\x_3,\y_1,\y,\x,\x_1,\x^\prime)\mapsto(w\x,\x_2\x,\x_3\x,\y_1\x^3,\y,\x_1,\x^\prime\x)\]
Carrying all these maps on the equation of $\widetilde{Z}$ we end up with a new $3$-fold $\overline{Z}$, defined as the complete intersection of
\[\y\y_1=-\x^\prime A-\x_1B\quad\text{ and }\quad\y\x_3\x^\prime=\y_1+\x^\prime C+\x_1D\]
in the latter rank two toric variety. Clearly the variable $\y_1$ can be eliminated. Hence, $\overline{Z}$ is the hypersurface 
\[\y^2\x_3\x^\prime-\x^\prime\y C-\x_1\y D=-\x^\prime A-\x_1B\]
in the toric variety with Cox ring
\[\left(\begin{array}{ccccccc}
w&\x_2&\x_3&\y&\x_1&\x^\prime\\
1&1&1&1&0&0\\
-1&0&0&0&1&1
\end{array}\right)\]
and irrelevant ideal $\overline{I}=(\x^\prime,\x_1)\cap(w,\x_2,\x_3,\y)$. It is easy to see that $\overline{Z}$ is a cubic surface fibration over $\PP^1_{\x^\prime:\x_1}$, and has a $cA_1$ singularity at the point $p_{\y\x_1}$.
\end{proof}

\begin{prop}\label{iso}The two models obtained in Lemma \ref{dP3-Y} and Lemma \ref{dP3-Z} are isomorphic.\end{prop}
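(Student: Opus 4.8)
The plan is to write down explicit Cox ring presentations of the two cubic surface fibrations $Z$ (from Lemma~\ref{dP3-Y}) and $\overline{Z}$ (from Lemma~\ref{dP3-Z}), together with their defining equations, and exhibit an isomorphism of graded rings between them. Both target toric varieties have Cox ring of the shape
\[\left(\begin{array}{cccccc}
v&u&\x_2&\y_1&\x&\x_1\\
1&1&1&1&0&0\\
-1&0&0&0&1&1
\end{array}\right)
\quad\text{and}\quad
\left(\begin{array}{cccccc}
w&\x_2&\x_3&\y&\x_1&\x^\prime\\
1&1&1&1&0&0\\
-1&0&0&0&1&1
\end{array}\right),\]
so the ambient rank-two toric fourfolds are literally the same $\PP(1,1,1,1)$-bundle over $\PP^1$ (with the same irrelevant ideals $(\x,\x_1)\cap(\text{rest})$), once we match $(w,\x_2,\x_3,\y)\leftrightarrow(v,u,\x_2,\y_1)$ and the base coordinates $(\x^\prime,\x_1)\leftrightarrow(\x,\x_1)$. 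So the only content is to check that under this dictionary the hypersurface equation for $\overline{Z}$, namely $\y^2\x_3\x^\prime-\x^\prime\y C-\x_1\y D+\x^\prime A+\x_1B=0$, is carried to the equation for $Z$, namely $\x_1\bigl(A+\y_1(u\y_1+C)\bigr)+\x\bigl(B+\y_1D\bigr)=0$.

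Concretely, I would proceed as follows. First, normalise both equations: expand the $Z$-equation as $\x_1 A + \x_1\y_1 u\y_1 + \x_1\y_1 C + \x B + \x\y_1 D = 0$ and the $\overline{Z}$-equation (after renaming) as $\y^2\x_3 v - v\y C - \x_1\y D + vA + \x_1 B = 0$. Both are cubic in the total grading and linear in the "base pencil'' $(\x:\x_1)$ resp.\ $(v\text{-twisted})$, and in each the coefficient of one base variable is a pure cubic ($A$ or $B$) while the coefficient of the other carries the fibre-direction monomial ($\y_1$ resp.\ $\y$). The key observation is that the role played by $\y_1$ in $Z$ (a weight-$1$ fibre coordinate appearing quadratically through the $\y_1(u\y_1+C)$ term) is played by $\y$ in $\overline{Z}$, and the Kawamata-blow-up coordinate $v$ in $Z$ corresponds to $\x^\prime$ in $\overline{Z}$. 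I would write the explicit linear change of variables on the six Cox coordinates (an element of a torus times a permutation, respecting both gradings) and verify term-by-term that it matches the two cubics; since $A,B$ are general cubics and $C,D$ general quadrics in the appropriate three variables, this is a finite check on monomials, and the genericity is preserved because the substitution is an isomorphism on the relevant polynomial subring.

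The one genuine subtlety — and the step I expect to be the main obstacle — is bookkeeping which three variables $A,B,C,D$ actually depend on at each stage, since these were repeatedly re-substituted ($x_i\mapsto v\x$, etc.) along each chain of flops and contractions, so the "same'' cubic $A$ on the $Z$-side and on the $\overline{Z}$-side is a priori a polynomial in different (re-scaled) coordinates. I would track these substitutions carefully from Lemma~\ref{dP3-Y} and Lemma~\ref{dP3-Z} and confirm that after the dictionary above they agree, so that the isomorphism is an honest isomorphism of the two families (not merely of individual members). Once the equations and ambient toric data are shown to coincide, the isomorphism $Z\cong\overline{Z}$ is immediate, and it is automatically compatible with the two $\PP^1$-fibrations (hence an isomorphism of Mori fibre spaces over $\PP^1$), which is what is needed for the square-birationality statement in Lemma~\ref{dP3-Z} and ultimately for Theorem~\ref{main-intro}.
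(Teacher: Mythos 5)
Your approach---writing out the explicit Cox-ring presentations and hypersurface equations produced by the two lemmas and matching them by a graded coordinate change---is essentially the paper's own argument: its proof of Proposition~\ref{iso} is just the one-line assertion that the isomorphism is clear from Lemmas~\ref{dP3-Y} and~\ref{dP3-Z}, i.e.\ that both constructions output the same cubic-surface hypersurface in the same rank-two toric variety over $\PP^1$. The only adjustment is that your provisional dictionary should swap the base coordinates ($\x^\prime\leftrightarrow\x_1$ and $\x_1\leftrightarrow\x$, since in $\overline{Z}$ it is the $\x^\prime$-coefficient that carries $A$) and take $\x_3\leftrightarrow u$, $\y\leftrightarrow-\y_1$, which is precisely the torus-times-permutation freedom you already leave yourself to fix in the term-by-term check.
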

\begin{proof}This is clear from Lemma \ref{dP3-Y} and Lemma \ref{dP3-Z}.\end{proof}

\subsection{Pliability of the models and the circle}
It follows from Theorem~\ref{2ray-1}, Lemma~\ref{dP3-Y}, Lemma~\ref{dP3-Z} and Corollary~\ref{iso} that the Fano $3$-fold $X\subset\PP(1,1,1,1,2,2,3)$ is birtional to a Fano complete intersection $Y_{3,3}\subset\PP(1,1,1,1,1,2)$ and a fibration of cubic surfaces over $\PP^1$, up to square birational equivalence. Hence the pliability of this $3$-fold is at least three. The following lemma shows that $X$ is not rational.

\begin{lm}The $3$-fold $X$ and consequently all other models birational to it are non-rational.\end{lm}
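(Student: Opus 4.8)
The plan is to reduce the non-rationality of $X$ to a known non-rationality statement for one of its birational models, rather than attacking $X$ directly. Since rationality is a birational invariant and, by Theorem~\ref{2ray-1}, Lemma~\ref{dP3-Y}, Lemma~\ref{dP3-Z} and Proposition~\ref{iso}, the $3$-fold $X$ is birational to the Fano complete intersection $Y=Y_{3,3}\subset\PP(1,1,1,1,1,2)$ (and also to the cubic surface fibrations $Z$, $\widetilde Z$, $\overline Z$ over $\PP^1$), it suffices to prove that one of these models is non-rational. The natural candidate is $Y_{3,3}$, the general codimension two Fano $3$-fold which is a complete intersection of two cubics in $\PP(1,1,1,1,1,2)$: this is precisely one of the families treated by Cheltsov in \cite{vanya-rational}, so I would simply invoke his result that a general such $Y_{3,3}$ is non-rational.

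First I would record that the models produced in the earlier sections are birational to $X$ (already established), so $\C(X)\cong\C(Y)$. Next I would check that the genericity assumptions under which Cheltsov proves non-rationality of $Y_{3,3}$ are compatible with (indeed, implied by) the genericity we have imposed on $A,B,C,D$ defining $X$: tracing through the construction in the proof of Theorem~\ref{2ray-1}, the cubics $A+y_1(uy_1+C)-x_1z$ and $B+xz+y_1D$ cutting out $Y\subset\PP(1,1,1,1,1,2)$ are general in the relevant sense precisely because $A,B$ are general cubics and $C,D$ general quadrics in the original variables. Hence for general $X$ the model $Y$ lies in the non-rational locus of Cheltsov's theorem, and we conclude that $Y$, and therefore $X$, is non-rational; the same then holds for $Z$, $\widetilde Z$ and $\overline Z$ since all are mutually birational.

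The main (and essentially only) obstacle is bookkeeping rather than anything deep: one must verify that ``$X$ general'' really does force ``$Y_{3,3}$ general'' in the exact sense required by \cite{vanya-rational}, i.e. that the specialisation of the defining equations of $Y$ coming from our particular Pfaffian format does not accidentally land in the (measure-zero) locus where Cheltsov's argument breaks down. Since the map from the parameter space of $(A,B,C,D)$ to the parameter space of pairs of cubics on $\PP(1,1,1,1,1,2)$ is clearly dominant (one can already match up leading terms), a general choice on the source maps to a general point on the target, so this verification goes through. With that in hand, the lemma follows immediately from birational invariance of rationality together with Cheltsov's non-rationality theorem, completing also the last assertion of Theorem~\ref{main-intro}.
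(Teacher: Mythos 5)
Your reduction to $Y_{3,3}$ has a genuine gap: the $3$-fold $Y$ produced in Theorem~\ref{2ray-1} is \emph{not} a general member of the family of complete intersections $Y_{3,3}\subset\PP(1,1,1,1,1,2)$, and your key claim that the parameter map from $(A,B,C,D)$ to pairs of cubics in $\PP(1,1,1,1,1,2)$ is dominant is false. The two cubics cutting out $Y$ have the constrained shape $A+y_1(uy_1+C)-x_1z$ and $B+xz+y_1D$, with $A,B,C,D$ forms in (a subset of) the weight-one variables only, linear in $z$ and with prescribed $y_1$-terms; as the paper itself computes at the end of the proof of Theorem~\ref{2ray-1}, this forces $Y$ to acquire a $cA_1$ singular point at $p_{y_1}$ in addition to the $\frac{1}{2}(1,1,1)$ point $p_z$, whereas a general member of the $Y_{3,3}$ family is quasi-smooth with only the quotient singularity. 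So the image of your parameter map lies in a proper closed subfamily, and a non-rationality theorem stated for the general $Y_{3,3}$ cannot be invoked for this special member. Nor can genericity be used to patch this: non-rationality does not pass from general members to special ones (by specialization of rationality it is the opposite implication that holds), so "matching leading terms" does not rescue the argument. It is also doubtful that \cite{vanya-rational} even treats the family $Y_{3,3}$: as used in the paper, that reference concerns del Pezzo fibrations.

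Indeed, the paper's own proof goes the other way around: it applies Cheltsov's non-rationality result \cite{vanya-rational} to the cubic surface (degree $3$ del Pezzo) fibrations over $\PP^1$ birational to $X$ --- the model $\widetilde Z$ from \S\ref{second}, equivalently $Z\cong\overline Z$ from Lemmas~\ref{dP3-Y} and~\ref{dP3-Z} --- and then transfers non-rationality to $X$, $Y$ and all other models by birational invariance. If you wish to keep your route through $Y$, you would have to verify the hypotheses of a non-rationality criterion for this specific non-quasi-smooth member, which is precisely the work that passing to the del Pezzo fibration model avoids.
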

\begin{proof}Considering the models of del Pezzo fibration birational to $X$, it follows from \cite{vanya-rational} that these varieties are not rational.\end{proof}
The diagram below visualizes the relation between all models discussed in this article.

\[\xymatrixcolsep{4pc}\xymatrixrowsep{4pc}
\xymatrix{
&X^\prime_1\ar_{\text{Bl~up }\frac{1}{2}(1,1,1)}[ld]
\ar@{-->}^{11\text{ flops}}_{(1,1,-1,-1)}[r]
&X^\prime_2\ar@{-->}^{\text{flip}}_{(3,1,-1,-2)}[r]& X^\prime_3\ar_{\text{Bl~up }cA_1}[rd]&&Y^\prime_1\ar_{\text{Bl~up } \frac{1}{2}(1,1,1)}[ld]\ar@{-->}^{9\text{ flops}}_{(1,1,-1,-1)}[r]&Z\ar^{dP_3}[ld]\ar^[@]{\cong}[dd]\\
X&&&&Y&\PP^1&\\
&X^{\prime\prime}_1\ar^{\text{Bl~up }\frac{1}{3}(1,1,2)}[lu]
\ar@{-->}^{7\text{ flops}}_{(1,1,-1,-1)}[r]
&X^{\prime\prime}_2\ar@{-->}^{\text{flip}}_{(2,1,-1,-1)}[r]&\widetilde{Z}\ar^{dP_3}[rru]\ar@{-->}^{\text{Square Birational}}[rrr]&\ar @{} [rru] |{\text{\LARGE$\Square$}}&&\overline{Z}\ar_{dP_3}[lu]\\
&&&&Z^\prime_1\ar^{\text{Bl~up }\frac{1}{2}(1,1,1)}[lu]\ar@{-->}^{6\text{ flops}}_{(1,1,-1,-1)}[r]&Z^\prime_2\ar^{\text{Bl~up }cA_1}[ru]&
}\]
The computations that we carried out in this article together with our experience working with Fano $3$-folds (and especially in codimension $3$) suggest the following question, which relates the number of singular points on a general Fano $3$-fold in codimension $3$ to its pliability. 

\begin{question} Let $X$ be a Fano $3$-fold embedded
    in a weighted projective space in codimension $3$ and suppose $X$ is quasi-smooth.
    Let $n$ be the number of different analytic
    types of singularities that appear in $X$.
    Is it true that if $X$ is general then $\pli(X)=n+1$?
\end{question}

\bibliographystyle{amsplain}
\bibliography{bib}

\vspace{0.6cm}

Radon Institute, Austrian Academy of Sciences

Altenberger Str. 69, A-4040 Linz, Austria

e-mail: \url{hamid.ahmadinezhad@oeaw.ac.at}

\vspace{0.4cm}

Dipartimento di Matematica e Informatica, Universit\`{a} degli Stud\^{i} Udine

Via delle Scienze, 206, 33100 Udine, Italy

e-mail: \url{zucconi@dimi.uniud.it}

\end{document}